\numberwithin{equation}{section}
\theoremstyle{plain}
\newtheorem{thm}{Theorem}[section]
\theoremstyle{remark}
\newtheorem{rem}[thm]{Remark}
\theoremstyle{remark}
\theoremstyle{plain}
\newtheorem{lem}[thm]{Lemma}
\theoremstyle{plain}
\newtheorem{prop}[thm]{Proposition}
\theoremstyle{plain}
\newtheorem{cor}[thm]{Corollary}
\begin{document}

\title{A note on Talagrand's variance bound in terms of influences}

\author{Demeter Kiss \thanks{CWI; research supported by NWO; e-mail: D.Kiss@cwi.nl}}
\maketitle
\begin{abstract}
Let $X_1,\ldots, X_n$ be independent Bernoulli random variables and $f$ a function on $\{0,1\}^n.$ 
In the well-known paper \cite{Talagrand1994} Talagrand gave an upper bound for the variance of $f$ in terms of 
the individual influences of the $X_i\mbox{'s}.$ This bound turned out to be very useful, for instance
in percolation theory and related fields.

In many situations a similar bound was needed for random variables taking more than two values. 
Generalizations of this type have indeed been obtained in the literature (see e.g. \cite{Cordero-Erausquin2011}),
but the proofs are quite different from that in \cite{Talagrand1994}. This might raise the impression that
Talagrand's original method is not sufficiently robust to obtain such generalizations.

However, our paper gives an almost self-contained proof of the above mentioned generalization,
by modifying step-by-step Talagrand's original proof.

%

\end{abstract}

\textit{Keywords and phrases: influences, concentration inequalities, sharp threshold}\\
\textit{AMS 2010 classifications: 42B05, 60B15, 60B11}

\section{Introduction and statement of results}

\subsection{Statement of the main results}

Let $\left(\Omega,\mathcal{F},\mu\right)$ be an arbitrary probability
space. We denote its $n$-fold product by itself by $\left(\Omega^{n},\mathcal{F}^{n},\mu^{n}\right).$
Let $f:\Omega^{n}\rightarrow\mathbb{C}$ be a function with finite
second moment, that is $\int_{\Omega^{n}}|f|^{2}d\mu^{n}<\infty.$
The influence of the $i$th variable on the function $f$ is defined as
\[
  \Delta_{i}f\left(x_{1},\ldots,x_{n}\right)=f\left(x_{1},\ldots,x_{n}\right)-\int_{\Omega}f\left(x_{1},\ldots,x_{i-1},\xi,x_{i+1},\ldots,x_{n}\right)\mu\left(d\xi\right)
\]
for $x=\left(x_{1},\ldots,x_{n}\right)\in\Omega^{n}$ and $i=1,\ldots,n.$
We will use the notation $\left\Vert f\right\Vert _{q}$ for the $L^{q}$
norm $q\in[1,\infty)$ of $f,$ that is $\left\Vert f\right\Vert _{q}=\sqrt[q]{\int_{\Omega^{n}}|f|^{q}d\mu^{n}}.$

Using Jensen's inequality, Efron and Stein gave the following upper bound on the variance of $f$ (see \cite{Efron1981}):
\begin{equation}
  Var\left(f\right)\leq\sum_{i=1}^{n}\left\Vert \Delta_{i}f\right\Vert _{2}^{2}.\label{eq:Efron-Stein}
\end{equation}

In some cases \eqref{eq:Efron-Stein} has been improved.
We write $\mathcal{P}\left(S\right)$ for the power set of a set $S.$
For the case when $\Omega$ has two elements, say $0$ and $1,$ and
$\mu\left(\left\{ 1\right\} \right)=1-\mu\left(\left\{ 0\right\} \right)=p$,
Talagrand showed the following result:
\begin{thm}[Theorem 1.5 of \cite{Talagrand1994}] \label{thm:thm 1.5 of Talagrand}
  There exists a universal constant $K$ such
  that for every $p\in\left(0,1\right),$ $n\in\mathbb{N}$ and for
  every real valued function $f$ on 
$\left(\left\{ 0,1\right\} ^{n},\mathcal{P}\left(\left\{ 0,1\right\} ^{n}\right),\mu_{p}\right),$
  \begin{equation}
    Var\left(f\right)\leq K\log\left(\frac{2}{p(1-p)}\right)\sum_{i=1}^{n}\frac{\left\Vert \Delta_{i}f\right\Vert _{2}^{2}}{\log\left(e\left\Vert \Delta_{i}f\right\Vert _{2}/\left\Vert \Delta_{i}f\right\Vert _{1}\right)},\label{eq:Talineq}
  \end{equation}
  where $\mu_{p}$ is the product measure on $\{0,1\}^n$ with parameter $p.$
\end{thm}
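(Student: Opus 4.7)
The plan is to follow Talagrand's original strategy in \cite{Talagrand1994}. Its three ingredients are (a) the Efron--Stein (Fourier--Walsh) orthogonal decomposition of $f$, (b) a Bonami--Beckner-type hypercontractive inequality for $\mu_p^n$ with the correct $p$-dependence, and (c) a calibrated optimisation over a frequency cutoff that produces the denominator $\log(e\lVert\Delta_i f\rVert_2/\lVert\Delta_i f\rVert_1)$.

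First, decompose $f=\sum_{S\subseteq\{1,\ldots,n\}}f_S$ orthogonally, where $f_S$ depends only on $(X_j)_{j\in S}$ and has zero conditional mean in each variable $X_i$, $i\in S$. Then $\mathrm{Var}(f)=\sum_{S\ne\emptyset}\lVert f_S\rVert_2^2$ and $\Delta_i f=\sum_{S\ni i}f_S$. Rewriting
\[
\mathrm{Var}(f)=\sum_{i=1}^{n}\sum_{S\ni i}\frac{\lVert f_S\rVert_2^{2}}{|S|}
\]
reduces the task to controlling, for each $i$, the weighted sum $\sum_{S\ni i}\lVert f_S\rVert_2^2/|S|$ in terms of $\lVert\Delta_i f\rVert_2$ and $\lVert\Delta_i f\rVert_1$.

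Now introduce the noise operator $T_\rho f=\sum_S\rho^{|S|}f_S$ and a frequency cutoff $k\in\mathbb{N}$. The high-frequency contribution is trivial: $\sum_{|S|>k,\,S\ni i}\lVert f_S\rVert_2^2/|S|\le k^{-1}\lVert\Delta_i f\rVert_2^{2}$. The low-frequency contribution is controlled by the biased Bonami--Beckner inequality $\lVert T_\rho g\rVert_2\le\lVert g\rVert_{1+(\rho/\rho_p)^{2}}$, valid for $\rho\in(0,\rho_p]$ with $\rho_p^{-2}\asymp\log(2/(p(1-p)))$, combined with the log-convexity bound $\lVert g\rVert_q\le\lVert g\rVert_1^{(2-q)/q}\lVert g\rVert_2^{2(q-1)/q}$ for $q\in[1,2]$: since $\rho^{2|S|}\ge\rho^{2k}$ for $|S|\le k$,
\[
\sum_{\substack{|S|\le k\\ S\ni i}}\lVert f_S\rVert_2^{2}\le\rho^{-2k}\lVert T_\rho\Delta_i f\rVert_2^{2}\le\rho^{-2k}\lVert\Delta_i f\rVert_1^{a(\rho)}\lVert\Delta_i f\rVert_2^{2-a(\rho)},
\]
with $a(\rho)=2(1-(\rho/\rho_p)^{2})/(1+(\rho/\rho_p)^{2})\to 2$ as $\rho\to 0$. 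Choosing $\rho\in(0,\rho_p]$ and $k$ jointly so that the two contributions balance (roughly $k\asymp\log(1/r_i)/\log(2/(p(1-p)))$ with $r_i=\lVert\Delta_i f\rVert_1/\lVert\Delta_i f\rVert_2$) yields an $i$th contribution of order $\log(2/(p(1-p)))\cdot\lVert\Delta_i f\rVert_2^{2}/\log(e\lVert\Delta_i f\rVert_2/\lVert\Delta_i f\rVert_1)$. Summing over $i$ produces~\eqref{eq:Talineq}.

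The main obstacle is the joint optimisation of $k$ and $\rho$: the target log-denominator emerges only after a delicate cancellation between $\rho^{-2k}$ and the interpolation factor $r_i^{a(\rho)}$, and one must verify that the optimal $\rho$ actually lies in the hypercontractive range $(0,\rho_p]$ (otherwise hypercontractivity provides no gain and the argument collapses to the Efron--Stein bound~\eqref{eq:Efron-Stein}). A secondary point, where Talagrand's original symmetric argument genuinely needs adaptation, is establishing the biased Bonami--Beckner inequality with the correct $\rho_p^{-2}\asymp\log(2/(p(1-p)))$ scaling; this requires the sharp two-point estimate for Bernoulli$(p)$ together with tensorisation, and accounts for the $\log(2/(p(1-p)))$ prefactor in the conclusion.
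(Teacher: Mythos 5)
Your overall architecture is the right one, and it is the same as Talagrand's: orthogonal decomposition $f=\sum_S f_S$, the identity $\mathrm{Var}(f)=\sum_i\sum_{S\ni i}\lVert f_S\rVert_2^2/|S|$, a hypercontractive estimate combined with the interpolation $\lVert g\rVert_q\le\lVert g\rVert_1^{(2-q)/q}\lVert g\rVert_2^{2(q-1)/q}$, and a frequency cutoff. The genuine gap is in the analytic input you treat as a ``secondary point'': the inequality $\lVert T_\rho g\rVert_2\le\lVert g\rVert_{1+(\rho/\rho_p)^2}$ with $\rho_p^{-2}\asymp\log(2/(p(1-p)))$ is false. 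Already for $n=1$ and $g=\mathbf{1}_{\{x=1\}}$ one has $\lVert T_\rho g\rVert_2^2=p^2+\rho^2p(1-p)$ and $\lVert g\rVert_q^2=p^{2/q}$, so $\lVert T_\rho g\rVert_2\le\lVert g\rVert_q$ forces $\rho^2\le(p^{2/q-1}-p)/(1-p)\approx p^{(2-q)/q}$, which is \emph{polynomially} small in $p$ for any fixed $q<2$, while your claimed admissible range $\rho^2\le(q-1)\rho_p^2\asymp(q-1)/\log(1/p)$ is only logarithmically small. The correct biased statement (Gross's theorem applied with the two-point log-Sobolev constant $C\asymp\log(1/p_{min})$) is $\lVert T_\rho g\rVert_2\le\lVert g\rVert_q$ for $\rho\le(q-1)^{C/4}$. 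Your optimisation survives this correction: fixing $q$ bounded away from $2$ (so that $a(q)$ is a positive constant) forces $\log(1/\rho)\asymp\log(1/p_{min})$, hence $k\asymp\log(1/r_i)/\log(1/p_{min})$, and the prefactor $\log(2/(p(1-p)))$ emerges from the high-frequency term $1/k$ exactly as you intend. But as written, the lemma you invoke does not hold, and this is precisely where the difficulty of the theorem is concentrated.

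For comparison: this paper does not reprove Theorem \ref{thm:thm 1.5 of Talagrand} at all — it is quoted from \cite{Talagrand1994}, and it also follows from Theorem \ref{thm:Thm1.5gen} since $\log(1/p_{min})\le\log(2/(p(1-p)))$. More importantly, the paper's method deliberately avoids biased hypercontractivity. Instead of a $\rho$-dependent exponent $q(\rho)$, it uses a level-$m$ inequality with \emph{fixed} exponents, $\bigl\lVert\sum_{[y]=m}a_yu_y\bigr\rVert_4\le(C\theta k^\gamma)^m\bigl(\sum|a_y|^2\bigr)^{1/2}$ (Lemma \ref{lem:Gen Lemma 2.1}), whose base $\theta\le k/\sqrt{p_{min}}$ carries all of the measure dependence; this is obtained from the \emph{uniform}-measure inequality (Lemma \ref{lem:specgen beckner}) by a symmetrization argument, and its dual form (Lemma \ref{lem:genProp2.2}) feeds a cutoff computation with the single exponent $4/3$ (Proposition \ref{pro:M(g)^2 upper bound}). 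Your noise-operator formulation is essentially the alternative route through log-Sobolev/hypercontractivity constants that the paper attributes to \cite{Cordero-Erausquin2011} combined with \cite{Diaconis1996}; it is viable, but only with the hypercontractive threshold stated correctly.
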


\begin{rem}
  An alternative proof of Theorem \ref{thm:thm 1.5 of Talagrand} for
  the case $p=1/2$ can be found in \cite{Benjamini2003}.
\end{rem}

Inequality \eqref{eq:Talineq} gives a bound on $Var\left(f\right)$
in terms of the influences. It is useful when the function $f$ is complicated, but its influences
are tractable. Such situations occur for example in percolation
theory (see for example \cite{Benjamini2003,Bollobas2008,Berg2008}).
Further consequences of \eqref{eq:Talineq} include for example
the widely used KKL lower bound for influences \cite{Kahn1988}
and various so called sharp-threshold results e.g. \cite{Friedgut1996}.

In some cases, a generalization of Theorem \ref{thm:thm 1.5 of Talagrand} to the case $\{0,1,\ldots, k\}^n$ with $k>1$ is useful,
for example in \cite{Bollobas2010,Devroye2008}. However, up to our knowledge,
no such generalization has been explicitly stated in the literature. The main goal of our paper is to present
and prove an explicit generalization, Theorem \ref{thm:Thm1.5gen} below. We have used this theorem and referred to it in \cite{Kiss}.
\begin{thm}
  \label{thm:Thm1.5gen}There is a universal constant
  $K>0$ such that for each finite set $\Omega$ each measure $\mu$
  on $\Omega$ with $p_{min}=\min_{j\in\Omega}\mu\left(\left\{ j\right\} \right)>0,$
  and for all complex valued functions $f$ on $\left(\Omega^{n},\mathcal{P}\left(\Omega^{n}\right),\mu^{n}\right),$
  \begin{equation}
    Var\left(f\right)\leq K\log\left(1/p_{min}\right)\sum_{i=1}^{n}\frac{\left\Vert \Delta_{i}f\right\Vert _{2}^{2}}{\log\left(e\left\Vert \Delta_{i}f\right\Vert _{2}/\left\Vert \Delta_{i}f\right\Vert _{1}\right)}.\label{eq:gen talineq}
  \end{equation}
\end{thm}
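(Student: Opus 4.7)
The plan is to imitate Talagrand's original proof step by step, replacing the Walsh--Fourier expansion by a generalized orthogonal decomposition of $L^{2}(\mu^{n})$ and the two-point hypercontractive inequality by its analogue for $(\Omega,\mu)$. Let $E_{i}f(x)=\int f(x_{1},\ldots,\xi,\ldots,x_{n})\,\mu(d\xi)$, so that $\Delta_{i}=I-E_{i}$. For $S\subseteq\{1,\ldots,n\}$ the operator $\Pi_{S}=\prod_{i\in S}\Delta_{i}\prod_{i\notin S}E_{i}$ is an orthogonal projection on $L^{2}(\mu^{n})$, and setting $f_{S}=\Pi_{S}f$ gives an orthogonal decomposition $f=\sum_{S}f_{S}$ with
\[
  \mathrm{Var}(f)=\sum_{S\neq\emptyset}\|f_{S}\|_{2}^{2},\qquad \|\Delta_{i}f\|_{2}^{2}=\sum_{S\ni i}\|f_{S}\|_{2}^{2}.
\]
This identity plays the role of Parseval throughout the argument.

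Next I would introduce the Bonami--Beckner-type operator $T_{\rho}f=\sum_{S}\rho^{|S|}f_{S}$ and establish a hypercontractive inequality of the form $\|T_{\rho}f\|_{2}\leq\|f\|_{q}$ whenever $q-1\leq\rho^{2}/\eta$, for some $\eta=\eta(p_{\min})$ of order $\log(1/p_{\min})$. This would be derived by tensorization from the one-variable inequality on $(\Omega,\mu)$, which is in turn equivalent to a log-Sobolev estimate with the correct $p_{\min}$-dependence. The constant $\log(1/p_{\min})$ appearing in \eqref{eq:gen talineq} enters \emph{exclusively} through $\eta$, and this is the only step that differs qualitatively from Talagrand's two-point computation.

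With the decomposition and hypercontractive estimate in hand, I would carry out Talagrand's integration trick. For each $i$ one splits $1=\rho^{2|S|}+(1-\rho^{2|S|})$ inside $\|\Delta_{i}f\|_{2}^{2}=\sum_{S\ni i}\|f_{S}\|_{2}^{2}$, bounding the $\rho^{2|S|}$ part by $\|T_{\rho}\Delta_{i}f\|_{2}^{2}$. Hypercontractivity combined with log-convexity $\|\Delta_{i}f\|_{q}\leq\|\Delta_{i}f\|_{1}^{1-\theta}\|\Delta_{i}f\|_{2}^{\theta}$ for $\theta=2(q-1)/q$ converts $\|T_{\rho}\Delta_{i}f\|_{2}^{2}$ into a power of the ratio $r_{i}:=\|\Delta_{i}f\|_{1}/\|\Delta_{i}f\|_{2}$. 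Choosing $\rho$ coordinatewise so that $\rho^{2}\sim\eta/\log(1/r_{i})$, equivalently integrating over $\rho$ against the right weight as in \cite{Talagrand1994}, produces the denominator $\log(e/r_{i})$ and the prefactor $\log(1/p_{\min})$ exactly as in \eqref{eq:gen talineq}.

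The principal obstacle is obtaining the hypercontractive inequality with the sharp $p_{\min}$-dependence in a form that drops directly into Talagrand's scheme, since his original argument is tailored to the explicit two-point moment calculations. Once this is in place, the remaining estimates---Cauchy--Schwarz applied to the low-frequency residual, the optimization in $\rho$, and the summation over coordinates---do not rely on $|\Omega|=2$ in any essential way, and should go through verbatim. The resulting bound is precisely \eqref{eq:gen talineq} with a universal constant $K$.
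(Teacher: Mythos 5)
Your plan is sound, but it is not the paper's route; it is essentially the alternative derivation that the introduction explicitly sets aside, namely the semigroup/log-Sobolev approach of Cordero-Erausquin and Ledoux combined with the Diaconis--Saloff-Coste estimate. You work with the Hoeffding-type orthogonal decomposition $f=\sum_{S}f_{S}$, the noise operator $T_{\rho}f=\sum_{S}\rho^{|S|}f_{S}$, and a hypercontractive inequality whose $p_{\min}$-dependence is imported from the one-variable log-Sobolev constant of $(\Omega,\mu)$, which is of order $1/\log(1/p_{\min})$; this is exactly Theorem A.1 of Diaconis and Saloff-Coste, the ingredient the paper names as what one must add to Theorem 1 of Cordero-Erausquin--Ledoux to recover Theorem \ref{thm:Thm1.5gen}. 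The paper instead takes $\Omega=\mathbb{Z}_{k}$, builds a product basis $u_{y}$ by Gram--Schmidt, and proves the degree-$m$ estimate $\bigl\Vert\sum_{[y]=m}a_{y}u_{y}\bigr\Vert_{4}\leq(C\theta k^{\gamma})^{m}\bigl(\sum_{[y]=m}|a_{y}|^{2}\bigr)^{1/2}$ by a symmetrization argument transferring the uniform-measure inequality of Lemma \ref{lem:specgen beckner} to the biased measure; the factor $\log(1/p_{\min})$ then comes from the elementary bounds $\theta\leq k/\sqrt{p_{\min}}$ and $k\leq1/p_{\min}$, not from a log-Sobolev inequality. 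Your route buys generality (nothing restricts it to finite $\Omega$) at the price of a nontrivial external input and of abandoning Talagrand's original mechanism; the paper's route buys an almost self-contained argument that demonstrates the robustness of that mechanism, which is its stated purpose. The quantity $\sum_{S\ni i}\|f_{S}\|_{2}^{2}/|S|$ that your $\rho$-integration controls is precisely the paper's $M(\Delta_{i}f)^{2}$, so the two proofs converge at the final step.

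One point to fix when you write this out: the hypercontractivity condition ``$q-1\leq\rho^{2}/\eta$'' is inverted. For $\|T_{\rho}f\|_{2}\leq\|f\|_{q}$ with $1<q<2$ the constraint must force $\rho$ to be \emph{small}, roughly $\rho^{2}\lesssim(q-1)/\eta$ with $\eta\asymp\log(1/p_{\min})$; likewise ``$\rho^{2}\sim\eta/\log(1/r_{i})$'' can exceed $1$ and should read as a choice of degree cutoff (or of $\rho^{2}\sim 1/\eta$ with a cutoff at degree $\sim\log(1/r_{i})$). These are bookkeeping slips rather than gaps in the strategy.
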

\begin{rem}\label{rem: Thm1.5gen}
  Inequality \eqref{eq:gen talineq} is sharp up to a universal constant factor, which can easily be seen 
  by taking the function $f(x)=1$ if $x_i=\omega$ for all $i=1,\ldots,n$ where $\omega$ is some element of $\Omega$ is such that 
  $\mu\left(\left\{ \omega \right\} \right)=p_{min},$ and $f(x)=0$ otherwise.
\end{rem}

Herein, we follow the line of argument of Talagrand  \cite{Talagrand1994} and modify his symmetrization procedure to deduce the result above.
Given the paper of Talagrand \cite{Talagrand1994}, the proof is self contained apart from Lemma 1 of \cite{Devroye2008}.

Cordero-Erausquin and Ledoux \cite{Cordero-Erausquin2011}  in a recent preprint further generalized Theorem \ref{thm:Thm1.5gen},
however their approach is very different from the original proof of Talagrand. (One can deduce a result, equivalent up to a universal constant
to Theorem \ref{thm:Thm1.5gen}, from  Theorem 1 of \cite{Cordero-Erausquin2011}, by combining it with Theorem A.1 of \cite{Diaconis1996}. 
This results in a slightly more complicated proof.)

We finish this section by noting that the special case of Theorem \ref{thm:Thm1.5gen},
where $\mu^{n}$ is the uniform measure on $\Omega^{n},$ has been proved in \cite{Devroye2008}.

\subsection{Background and further motivation for Theorem 1.3}




Falik and Samarodnitsky \cite{Falik2007} used logarithmic Sobolev
inequalities to derive edge isoperimetric inequalities. Rossignol
used this method to derive sharp threshold results \cite{Rossignol2005,Rossignol2008}.
Furthermore,  Bena\"{i}m and Rossignol \cite{Benaim2008} extended the results of \cite{Benjamini2003}
(where Talagrand's Theorem \ref{thm:thm 1.5 of Talagrand} above is
applied to first-passage percolation), again
with the use of logarithmic Sobolev inequalities. These similar applications
suggest a deeper connection between logarithmic Sobolev inequalities
and \eqref{eq:Talineq}. Indeed, Bobkov and Houdr\'{e} in \cite{Bobkov1999},
proved that a version of \eqref{eq:Talineq} actually implies a logarithmic
Sobolev inequality in a continuous set-up. Moreover, Cordero-Erausquin and Ledoux in \cite{Cordero-Erausquin2011} 
showed the same implication under different assumptions. 

\medskip

Another motivation for Theorem \ref{thm:Thm1.5gen} is to point out the following mistake in the literature. We borrow the notation of \cite{Hatami2009}.
For any $x\in\Omega^n$ and $i=1,\ldots,n,$ we define 
$$s_i(x)=\left\{y\in\Omega^n\,|\,y_j=x_j \mbox{ for all } j\neq i\right\}.$$ 
For $i=1,\ldots,n,$ let $I_f(i)$ denote the probability of the event that the value of $f$ does depend on the $i$th coordinate, that is 
$$I_f(i)=\mu^n\left(\left\{x\in\Omega^n\,:\,f\mbox{ is non-constant on }s_i(x)\right\}\right).$$
The following claim, which is related to our Theorem \ref{thm:Thm1.5gen} was stated as Theorem 3.3 in \cite{Hatami2009}.
However, as we will show, this claim is incorrect.

\textit{For any probability space $\left(\Omega,\mathcal{F},\mu\right),$ and positive integer $n,$ for any square integrable function 
$f:\left(\Omega^n,\mathcal{F}^n,\mu^{n}\right)\rightarrow\mathbb{R},$ we have
\begin{equation}
  Var\left(f\right)\leq 10\sum_{i=1}^{n}\frac{\left\Vert \Delta_{i}f\right\Vert _{2}^{2}}{\log\left(1/I_f(i)\right)}. \label{eq:Hatami}
\end{equation}
}

%
%

One can easily see, that the following is a counterexample for this claim.
Let $k$ be an arbitrary positive integer. Take $n=2$ and consider the case where $\Omega=[0,1]$ and $\mu$ is the uniform measure.
Take the function $f$ (similar to the function in Remark \ref{rem: Thm1.5gen}) defined as $f(x_1,x_2)=1$ if $0\leq x_1,x_2\leq1/k$
and $0$ otherwise. Substituting to \eqref{eq:Hatami} and choosing $k$ large enough, we get a contradiction.

Note that we can easily salvage \eqref{eq:Hatami} under the conditions of Theorem \ref{thm:Thm1.5gen}.
If in equation \eqref{eq:Hatami} we replace the constant $10$ for $K\log(1/p_{min}),$ we get a valid statement,
since we it follows from \eqref{eq:gen talineq} by applying second moment method in the denominator.

\medskip

Most of the aforementioned applications of the inequality \eqref{eq:Talineq} are
concerned with the special case where $f=\textbf{1}_A,$ that is $f$ is the indicator function of some event $A\subseteq\Omega^n.$ 
We warn the reader about the slight inconsistency of the literature: $I_A(i)$ is called the influence of the $i$th variable on the event $A,$ 
instead of some $L^p,p\geq1$ norm of $\Delta_if=\Delta_i\textbf{1}_A,$ which is the usual influence for arbitrary functions. 
For comparison of different definitions of influence, see e.g. \cite{Keller}. 

Note that
\begin{equation}
  \Vert\Delta_i\textbf{1}_A\Vert_2^2=\Vert \Delta_i\textbf{1}_A\Vert_1\leq p_{med}\mu^n\left(A_i\right), 
\end{equation}
where $p_{med}=\max\left\{\mu(B)|B\subset\Omega, \mu(B)\leq\frac{1}{2}\right\}.$ Using this we can deduce the following generalization 
of Corollary 1.2 of \cite{Talagrand1994}.

\begin{cor}
  There is a universal constant $C>0$ such that for each finite set $\Omega$ and each measure $\mu$
  on $\Omega$ and for sets $A\subseteq\Omega^n,$
  \begin{equation}
    \sum_{i=1}^{n}I_A(i)\geq C\frac{\log\left(1/\max_iI_A(i)\right)}{p_{med}\log\left(1/p_{min}\right)}\mu^n(A)\left(1-\mu^n\left(A\right)\right). \label{eq:talineq events}
  \end{equation}
\end{cor}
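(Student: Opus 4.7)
The plan is to apply Theorem \ref{thm:Thm1.5gen} to $f=\textbf{1}_A$ and then rearrange. Since $Var(\textbf{1}_A)=\mu^n(A)(1-\mu^n(A))$, the task reduces to two sub-bounds on each summand of the right-hand side of \eqref{eq:gen talineq}: an upper bound on the numerator $\Vert\Delta_i\textbf{1}_A\Vert_2^2$ in terms of $I_A(i)$, and a lower bound on the denominator $\log(e\Vert\Delta_i\textbf{1}_A\Vert_2/\Vert\Delta_i\textbf{1}_A\Vert_1)$ of the form $c\log(1/\max_j I_A(j))$.

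The numerator bound $\Vert\Delta_i\textbf{1}_A\Vert_2^2\le p_{med}I_A(i)$ is exactly the inequality noted just above the corollary. For the denominator, the key observation is that $\Delta_i\textbf{1}_A$ vanishes off the set $A_i=\{x:\textbf{1}_A\text{ is non-constant on }s_i(x)\}$, which has $\mu^n$-measure $I_A(i)$. Hence, by Cauchy--Schwarz,
\begin{equation*}
  \Vert\Delta_i\textbf{1}_A\Vert_1\le\Vert\Delta_i\textbf{1}_A\Vert_2\,\sqrt{I_A(i)},
\end{equation*}
so that $\Vert\Delta_i\textbf{1}_A\Vert_2/\Vert\Delta_i\textbf{1}_A\Vert_1\ge 1/\sqrt{\max_j I_A(j)}$, and taking logarithms gives $\log(e\Vert\Delta_i\textbf{1}_A\Vert_2/\Vert\Delta_i\textbf{1}_A\Vert_1)\ge\tfrac12\log(1/\max_j I_A(j))$ (dropping the additional positive constant $1$ coming from the factor $e$).

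Plugging these two estimates into \eqref{eq:gen talineq} yields
\begin{equation*}
  \mu^n(A)(1-\mu^n(A))\le\frac{2Kp_{med}\log(1/p_{min})}{\log(1/\max_j I_A(j))}\sum_{i=1}^{n}I_A(i),
\end{equation*}
which rearranges to \eqref{eq:talineq events} with $C=1/(2K)$. The degenerate cases $\max_j I_A(j)\in\{0,1\}$ force $Var(\textbf{1}_A)=0$ (or render both sides zero) and can be dismissed separately. I expect no real obstacle here; the only step requiring any thought is treating the $L^1$ norm in the denominator by Cauchy--Schwarz against the support of $\Delta_i\textbf{1}_A$, which is precisely what converts the ratio $\Vert\Delta_i\textbf{1}_A\Vert_2/\Vert\Delta_i\textbf{1}_A\Vert_1$ into a quantity expressible in $I_A(i)$ alone.
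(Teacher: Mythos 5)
Your proposal is correct and follows essentially the route the paper intends: the paper leaves the corollary to the reader after displaying $\Vert\Delta_i\mathbf{1}_A\Vert_2^2=\Vert\Delta_i\mathbf{1}_A\Vert_1\le p_{med}\,\mu^n(A_i)$, and your Cauchy--Schwarz step against the support of $\Delta_i\mathbf{1}_A$ is exactly the ``second moment method in the denominator'' the paper alludes to just before the corollary. Both your numerator and denominator estimates, and the handling of the degenerate cases, are sound.
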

Using the corollary above, one can easily deduce the sharp threshold results of \cite{Bollobas2010}.

\medskip

We finish this introduction with some remarks on the proof of Theorem
\ref{thm:Thm1.5gen}. The proof of Theorem 1.5 of \cite{Talagrand1994}
uses a hypercontractive result (Bonami-Beckner inequality, see \cite{Beckner1975})
followed by a subtle symmetrization procedure (see Step 2 and 3 of
the proof of Lemma 2.1 in \cite{Talagrand1994}). In the proof of our
more general Theorem \ref{thm:Thm1.5gen} above, we use a consequence
of the extended Bonami-Beckner inequality (for an extension of the
Bonami-Beckner inequality see Claim 3.1 in \cite{N.Alon2004}) from
\cite{Devroye2008} and then modify Talagrand's symmetrization procedure.
This generalization of Talagrand's symmetrization argument, which
covers Sections \ref{sub:basis} and \ref{sub:ext lemma 2.1} is the
main part of our proof.

\section{\label{sec:Tal ineq pf}Proof of Theorem \ref{thm:Thm1.5gen}}

Without loss of generality, we assume that $\Omega=\mathbb{Z}_{k}$
(the integers modulo $k$) for some $k\in\mathbb{N}.$

Let $\eta$ be an arbitrary measure on $\mathbb{Z}_{k}^{n}.$
For each $\eta,$ we will write $L_{\eta}\left(\mathbb{Z}_{k}^{n}\right)$
for the (Hilbert) space of complex valued functions on $\mathbb{Z}_{k}^{n},$
with the inner product
\[
  \left\langle f,g\right\rangle _{\eta}=\int_{\mathbb{Z}_{k}^{n}}f\overline{g}d\eta\mbox{ for }f,g\in L_{\eta}\left(\mathbb{Z}_{k}^{n}\right).
\]
We will write $\left\Vert f\right\Vert _{L^{q}\left(\eta\right)}$
for the $q$-norm, $q\in\left[1,\infty\right),$ of a function $f:\mathbb{Z}_{k}^{n}\rightarrow\mathbb{C}$
with respect to the measure $\eta,$ that is
\[
  \left\Vert f\right\Vert _{L^{q}\left(\eta\right)}=\left(\int\left|f\right|^{q}d\eta\right)^{1/q}.
\]
When it is clear from the context which measure we are working with,
we will simply write $\left\Vert f\right\Vert _{q}.$

\subsection{\label{sub:Beckner's-inequality}A hypercontractive inequality}

Let $\nu^n$ denote the uniform measure on $\mathbb{Z}_{k}^{n}.$
Define the ``scalar product'' on $\mathbb{Z}_{k}^{n}$ by
\[
  \left\langle x,y\right\rangle =\sum_{i=1}^{n}x_{i}y_{i},\mbox{ for }x,y\in\mathbb{Z}_{k}^{n}.
\]
Let $\varepsilon=e^{2\pi i/k}.$ For every $y\in\mathbb{Z}_{k}^{n},$
define the functions 
\[
  w_{y}\left(x\right)=\varepsilon^{\left\langle x,y\right\rangle }\mbox{ for }x\in\mathbb{Z}_{k}^{n}.
\]
It is easy to check the following lemma.
\begin{lem}
  $\left\{ w_{y}\right\} _{y\in\mathbb{Z}_{k}^{n}}$ form an orthonormal
  basis in $L_{\nu^{n}}\left(\mathbb{Z}_{k}^{n}\right).$
\end{lem}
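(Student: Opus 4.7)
The plan is to verify orthonormality by direct computation of the inner products $\langle w_y, w_{y'}\rangle_{\nu^n}$, and then use a dimension count to conclude that these $k^n$ orthonormal vectors actually span the $k^n$-dimensional space $L_{\nu^n}(\mathbb{Z}_k^n)$.

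For the inner product computation, I would expand
\[
  \langle w_y, w_{y'}\rangle_{\nu^n} = \int_{\mathbb{Z}_k^n} \varepsilon^{\langle x, y\rangle} \overline{\varepsilon^{\langle x, y'\rangle}} \, d\nu^n(x) = \int_{\mathbb{Z}_k^n} \varepsilon^{\langle x, y-y'\rangle} \, d\nu^n(x),
\]
using $\overline{\varepsilon} = \varepsilon^{-1}$ and the bilinearity of $\langle\cdot,\cdot\rangle$. Since $\nu^n$ is the product of uniform measures on $\mathbb{Z}_k$, this integral factors as
\[
  \prod_{i=1}^{n} \frac{1}{k} \sum_{x_i=0}^{k-1} \varepsilon^{x_i (y_i - y'_i)}.
\]
The standard geometric-series identity for $k$-th roots of unity then gives that each factor equals $1$ when $y_i \equiv y'_i \pmod{k}$ and $0$ otherwise, so the whole product is $\delta_{y,y'}$. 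This establishes orthonormality.

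To conclude that $\{w_y\}_{y \in \mathbb{Z}_k^n}$ is a basis, I would note that $L_{\nu^n}(\mathbb{Z}_k^n)$ is a finite-dimensional Hilbert space of dimension $|\mathbb{Z}_k^n| = k^n$, which is exactly the cardinality of the orthonormal family just constructed. Any orthonormal set of size equal to the dimension is automatically a basis, so the claim follows. No serious obstacle arises; the only care needed is the roots-of-unity sum, which is elementary.
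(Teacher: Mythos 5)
Your proof is correct and is the standard argument; the paper itself omits the proof entirely (it only remarks that the lemma is "easy to check"), and your computation via the roots-of-unity sum plus the dimension count is exactly what is intended.
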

Let us denote the number of non-zero coordinates
of $\xi\in\mathbb{Z}_{k}^{n}$ by $\left[\xi\right].$ We
will use the following hypercontractive inequality:
\begin{lem}
  (Lemma 1 of \cite{Devroye2008})\label{lem:specgen beckner}
  There are positive constants $C,\gamma$ such such that for any $k,n\in\mathbb{N},$
  $m\in\left\{ 0,1,\ldots,n\right\} $ and complex numbers $a_{y},$
  for $y\in\mathbb{Z}_{k}^{n},$ we have
  \begin{equation}
    \left\Vert \sum_{\left[y\right]=m}a_{y}w_{y}\right\Vert _{L^{4}\left(\nu^{n}\right)}\leq\left(Ck^{\gamma}\right)^{m}\left(\sum_{\left[y\right]=m}\left|a_{y}\right|^{2}\right)^{1/2}. \label{eq:}
  \end{equation}
\end{lem}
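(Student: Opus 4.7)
The inequality is a Bonami--Beckner-type hypercontractive estimate for $m$-homogeneous Fourier polynomials on $\mathbb{Z}_k^n$. My plan is to (i) recast \eqref{eq:} as an $L^2\to L^4$ norm bound for a single noise operator, (ii) verify the bound in the single-variable case by a direct Fourier computation, and (iii) promote it to arbitrary $n$ by tensorization.

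For $\rho\in(0,1]$, define the noise operator $T_\rho$ on $L^2(\nu^n)$ by $T_\rho w_y=\rho^{[y]}w_y$ on the Fourier basis. On the $m$-homogeneous subspace $T_\rho$ acts as multiplication by $\rho^m$, and by Parseval $\|\sum_{[y]=m}a_y w_y\|_{L^2(\nu^n)}^2=\sum_{[y]=m}|a_y|^2$. Hence it suffices to find $\rho=c/k^{\gamma}$, with absolute constants $c,\gamma>0$, such that
\[
\|T_\rho f\|_{L^4(\nu^n)}\le\|f\|_{L^2(\nu^n)}\qquad\text{for every }f:\mathbb{Z}_k^n\to\mathbb{C};
\]
applying this to $f=\sum_{[y]=m}a_y w_y$ then yields \eqref{eq:} with $Ck^\gamma=1/\rho$.

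For the single-variable case $n=1$, write $f=a_0+g$ with $g=\sum_{y\ne 0}a_y w_y$ of mean zero, so $T_\rho f=a_0+\rho g$ and the required inequality becomes $\int_{\mathbb{Z}_k}|a_0+\rho g|^4\,d\nu\le(|a_0|^2+\|g\|_{L^2(\nu)}^2)^2$. Expanding the fourth power, the order-$\rho$ cross terms vanish because $\int g\,d\nu=0$, while the remaining terms can be controlled using Parseval together with the crude pointwise bound $\|g\|_{L^\infty(\nu)}\le\sqrt{k-1}\,\|g\|_{L^2(\nu)}$ (Cauchy--Schwarz on the Fourier expansion of $g$), which in particular yields $\|g\|_{L^4(\nu)}^4\le(k-1)\|g\|_{L^2(\nu)}^4$. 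I expect this to be the main obstacle: the naive triangle estimate $\|T_\rho f\|_4\le|a_0|+\rho\|g\|_4$ is too weak, and one genuinely needs the mean-zero cancellation together with a careful bookkeeping of the higher-order terms to pin down the correct exponent $\gamma$.

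Finally, to pass from $n=1$ to arbitrary $n$ I invoke the standard tensorization principle for hypercontractive operators: if a linear operator $T$ has norm at most one as a map $L^2\to L^4$ on a single factor, then its tensor power $T^{\otimes n}$ has norm at most one as a map $L^2(\nu^n)\to L^4(\nu^n)$, which is a consequence of the generalised Minkowski inequality (valid because $4\ge 2$). Since $T_\rho$ on $\mathbb{Z}_k^n$ is precisely the $n$-fold tensor product of the one-variable noise operator, the contraction bound from the previous step transfers verbatim, and \eqref{eq:} follows.
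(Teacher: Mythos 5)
The paper offers no proof of this lemma to compare against: it is imported wholesale from Lemma~1 of \cite{Devroye2008}, whose proof in turn rests on Claim~3.1 of \cite{N.Alon2004}. Your proposal is therefore a self-contained substitute, and it follows the classical hypercontractivity template (a one-variable $L^2\to L^4$ contraction for a noise operator, promoted to $n$ variables by tensorization), which is in the same spirit as those sources. The reduction of \eqref{eq:} to $\|T_\rho\|_{L^2\to L^4}\le 1$ with $Ck^\gamma=1/\rho$ is correct, since $T_\rho$ acts as $\rho^m$ on the $m$-homogeneous subspace and Parseval applies to the orthonormal system $\{w_y\}$. The tensorization step is also correct as stated: for linear operators of norm at most one from $L^q$ to $L^p$ with $p\ge q$, the tensor product again has norm at most one, by the Minkowski integral inequality argument, and $T_\rho$ on $\mathbb{Z}_k^n$ is indeed the $n$-fold tensor power of the one-variable operator.

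The one step you leave unfinished is the single-variable estimate, and since that is precisely where a wrong exponent could hide, let me confirm it closes. Expanding $\bigl(|a_0|^2+2\rho\,\Re(\bar a_0 g)+\rho^2|g|^2\bigr)^2$ and integrating, the order-$\rho$ term vanishes because $\int g\,d\nu=0$, and bounding $4(\Re(\bar a_0g))^2\le 4|a_0|^2|g|^2$ one is left to verify
\[
6\rho^2|a_0|^2\|g\|_2^2+4\rho^3\!\int\Re(\bar a_0 g)\,|g|^2\,d\nu+\rho^4\|g\|_4^4\;\le\;2|a_0|^2\|g\|_2^2+\|g\|_2^4 .
\]
Your bound $\|g\|_\infty\le\sqrt{k-1}\,\|g\|_2$ gives $\rho^4\|g\|_4^4\le\rho^4(k-1)\|g\|_2^4$ and, via $\int|g|^3\,d\nu\le\|g\|_\infty\|g\|_2^2$ and the AM--GM inequality $|a_0|\,\|g\|_2\le\frac{1}{2}(|a_0|^2+\|g\|_2^2)$, the middle term is at most $2\rho^3\sqrt{k}\bigl(|a_0|^2\|g\|_2^2+\|g\|_2^4\bigr)$. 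Taking $\rho=\frac{1}{2}k^{-1/4}$ makes the coefficient of $|a_0|^2\|g\|_2^2$ at most $\frac{3}{2}+\frac{1}{4}\le 2$ and that of $\|g\|_2^4$ at most $\frac{1}{4}+\frac{1}{16}\le 1$, so the lemma holds with $C=2$, $\gamma=1/4$. In short, the strategy is sound and every step can be completed; the only criticism is that, as written, the key computation is announced as an expectation rather than carried out, so the text is a proof outline rather than a proof.
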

\begin{rem}
  The proof (in \cite{Devroye2008}) of Lemma \ref{lem:specgen beckner}
  is based on Claim 3.1 of \cite{N.Alon2004}. Claim 3.1 of \cite{N.Alon2004}
  is a generalization of the so called Bonami-Beckner inequality (see
  Lemma 1 of \cite{Beckner1975}). That inequality played an important
  role in \cite{Talagrand1994} in the original proof of Theorem \ref{thm:thm 1.5 of Talagrand}.
\end{rem}

\subsection{\label{sub:basis}Finding a suitable basis}

We assume that $\mu\left(\left\{ j\right\} \right)>0$ for all
$j\in\mathbb{Z}_{k}.$ Let $L_{\mu}\left(\mathbb{Z}_{k}\right)$
be the Hilbert space of functions from $\mathbb{Z}_{k}$ to $\mathbb{C},$
with the inner product
\[
  \left\langle a,b\right\rangle _{\mu}=\sum_{j\in\mathbb{Z}_{k}}a\left(j\right)\overline{b\left(j\right)}\mu\left(\left\{ j\right\} \right)\,\mbox{for }a,b\in L_{\mu}\left(\mathbb{Z}_{k}\right).
\]
Let $c_{0}\in L_{\mu}\left(\mathbb{Z}_{k}\right)$ be the constant $1$ function. By Gram-Schmidt orthogonalization, there
exist functions $c_{l}\in L_{\mu}\left(\mathbb{Z}_{k}\right)$ 
for $l\in\mathbb{Z}_{k}\setminus\left\{ 0\right\} ,$ such that $c_{j},\, j\in\mathbb{Z}_{k}$
form an orthonormal basis in $L_{\mu}\left(\mathbb{Z}_{k}\right).$

Using the functions $c_{j},$ $j\in\mathbb{Z}_{k}$ we define an orthonormal
basis in $L_{\mu^n}\left(\mathbb{Z}_{k}^{n}\right)$ analogous
to the basis $w_{y},$ $y\in\mathbb{Z}_{k}^{n}.$ It is easy to check the following lemma.
\begin{lem}
  \label{lem:u onb}The functions $u_{y},$ for $y\in\mathbb{Z}_{k}^{n},$ defined by
  \begin{equation}
    u_{y}(x)=\prod_{i=1}^{n}c_{y_{i}}(x_{i})\mbox{ for }x\in\mathbb{Z}_{k}^{n}, \label{eq:def u}
  \end{equation}
  form an orthonormal basis in $L_{\mu}\left(\mathbb{Z}_{k}^{n}\right).$
\end{lem}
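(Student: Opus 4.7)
The plan is a short, standard tensor product argument: the basis $\{u_y\}$ is the coordinate-wise product of the one-dimensional orthonormal basis $\{c_j\}$, and the measure $\mu^n$ is a product, so orthonormality factorizes; a dimension count then upgrades orthonormality to a basis.

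First, I would verify orthonormality directly. For $y, y' \in \mathbb{Z}_k^n$, using the product structure of $\mu^n$ and of the functions $u_y$,
\begin{equation*}
\langle u_y, u_{y'} \rangle_{\mu^n}
= \sum_{x \in \mathbb{Z}_k^n} \prod_{i=1}^n c_{y_i}(x_i)\overline{c_{y'_i}(x_i)} \,\mu(\{x_i\})
= \prod_{i=1}^n \sum_{x_i \in \mathbb{Z}_k} c_{y_i}(x_i)\overline{c_{y'_i}(x_i)} \mu(\{x_i\})
= \prod_{i=1}^n \langle c_{y_i}, c_{y'_i}\rangle_\mu.
\end{equation*}
Since $\{c_j\}_{j\in\mathbb{Z}_k}$ is orthonormal in $L_\mu(\mathbb{Z}_k)$, each factor equals $\delta_{y_i, y'_i}$, and the whole product equals $\delta_{y,y'}$. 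This gives orthonormality of $\{u_y\}_{y\in\mathbb{Z}_k^n}$.

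Next, I would conclude via dimension counting. The space $L_{\mu^n}(\mathbb{Z}_k^n)$ (note: since $\mu(\{j\})>0$ for all $j$, this is simply all complex functions on the finite set $\mathbb{Z}_k^n$) has complex dimension $k^n$, and the collection $\{u_y\}_{y\in\mathbb{Z}_k^n}$ has exactly $k^n$ elements. Orthonormal vectors are linearly independent, so $k^n$ orthonormal vectors in a $k^n$-dimensional space form a basis.

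There is no real obstacle here; the only thing to be careful about is that the inner product one uses on $L_{\mu^n}(\mathbb{Z}_k^n)$ indeed factorizes across coordinates (this is immediate from Fubini for finite sums together with the definition of the product measure $\mu^n$). Everything else is a one-line computation, which is presumably why the author labels the lemma as ``easy to check.''
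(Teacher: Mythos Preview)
Your argument is correct and is exactly the standard verification one has in mind for such a statement: factorize the inner product over coordinates using the product structure of $\mu^n$, use orthonormality of $\{c_j\}$ in each factor, and finish with a dimension count. The paper does not actually give a proof of this lemma---it simply declares it ``easy to check''---so there is nothing further to compare; your write-up is precisely the routine check the author is leaving to the reader.
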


\subsection{\label{sub:ext lemma 2.1} Extension of Lemma \ref{lem:specgen beckner}}

The key ingredient in the proof of Theorem \ref{thm:Thm1.5gen} is the following generalization of Lemma \ref{lem:specgen beckner}.
It can also be seen as an extension of Lemma 2.1 of \cite{Talagrand1994}. One could also use Theorem 2.2 of \cite{Wolff2007}, however the proof of that theorem 
is more complicated.
\begin{lem}
  \label{lem:Gen Lemma 2.1} With the constants of Lemma \ref{lem:specgen beckner}, we have for every $k,n\in\mathbb{N},$
  $m\in\left\{ 0,1,\ldots,n\right\} $ and complex numbers $a_{y},$ $y\in\mathbb{Z}_{k}^{n},$ 
  \begin{equation}
    \left\Vert \sum_{\left[y\right]=m}a_{y}u_{y}\right\Vert _{L^{4}\left(\mu^{n}\right)}\leq\left(C\theta k^{\gamma}\right)^{m}\left(\sum_{\left[y\right]=m}\left|a_{y}\right|^{2}\right)^{1/2}  \label{eq:Gen Lemma 2.1}
  \end{equation}
  holds, where $\theta=k\max_{i,j}\left|c_{i}\left(j\right)\right|.$
\end{lem}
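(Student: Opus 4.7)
I would follow Talagrand's symmetrization philosophy from~\cite{Talagrand1994}, adapted to the $k$-ary setting: lift the non-uniform measure $\mu$ on $\mathbb{Z}_k$ to a uniform one on a larger cyclic group $\mathbb{Z}_N$, apply Lemma~\ref{lem:specgen beckner} there, then descend while gaining only the advertised factor $\theta^m$.

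By continuity of both sides in $\mu$, it is enough to prove the bound for rational $\mu$: $\mu(\{j\}) = n_j/N$ for positive integers with $\sum n_j = N$. Fix a partition $\mathbb{Z}_N = B_0 \sqcup \cdots \sqcup B_{k-1}$ with $|B_j| = n_j$, and let $\pi: \mathbb{Z}_N \to \mathbb{Z}_k$ be the associated projection, so that $\pi_\ast \nu_N = \mu$. With $\tilde u_y := u_y \circ \pi^{\otimes n}$, we have
\[
\Bigl\| \sum_{[y]=m} a_y u_y \Bigr\|_{L^4(\mu^n)} = \Bigl\| \sum_{[y]=m} a_y \tilde u_y \Bigr\|_{L^4(\nu_N^n)}.
\]

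Expand each $c_j \circ \pi = \sum_{l \in \mathbb{Z}_N} \alpha_{j,l}\, w_l^{(N)}$ in the Fourier basis of $\mathbb{Z}_N$. Two properties drive the argument: (i) $|\alpha_{j,l}| \leq \|c_j\|_\infty \leq \theta/k$; (ii) for $j \neq 0$,
\[
\alpha_{j,0} = \tfrac{1}{N}\textstyle\sum_{x \in \mathbb{Z}_N} c_j(\pi(x)) = \sum_{j'} c_j(j')\mu(\{j'\}) = \langle c_j, c_0 \rangle_\mu = 0.
\]
This vanishing is critical and holds only after lifting. Combined with $c_0 \circ \pi \equiv 1 = w_0^{(N)}$, property (ii) forces the Fourier expansion $\sum_{[y]=m} a_y \tilde u_y = \sum_z b_z w_z^{(N)}$ to satisfy $b_z = 0$ unless $\mathrm{supp}(z) = \mathrm{supp}(y)$ for all contributing $y$, so only $[z] = m$ appears. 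A block-wise Parseval identity (using $\sum_{z_i \neq 0} \alpha_{j,z_i}\overline{\alpha_{j',z_i}} = \delta_{j,j'}$ for $j, j' \neq 0$, a consequence of the $L^2(\mu)$-orthonormality of $\{c_j\}$) then yields $\sum_{[z]=m} |b_z|^2 = \sum_{[y]=m} |a_y|^2$. Applying Lemma~\ref{lem:specgen beckner} on $\mathbb{Z}_N^n$ at level $m$ gives
\[
\Bigl\| \sum_{[z]=m} b_z w_z^{(N)} \Bigr\|_{L^4(\nu_N^n)} \leq (CN^\gamma)^m \Bigl(\textstyle\sum_{[y]=m} |a_y|^2\Bigr)^{1/2}.
\]

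\textbf{Main obstacle.} The bound just derived depends on $N$, which can be taken arbitrarily large in the rational approximation, whereas the target constant involves only $k$ and $\theta$. Bridging this gap is the technical heart of the proof, and corresponds directly to the modification of Talagrand's symmetrization mentioned in the introduction. The key is to exploit the sparsity of the coefficients $b_z$: although they live in a level-$m$ subspace of $\mathbb{Z}_N^n$ of size $\Theta(N^m)$, only $\Theta(k^m)$ of them are effectively independent, being encoded by $y \in \mathbb{Z}_k^n$ via $b_z = \sum_{y: \mathrm{supp}(y) = \mathrm{supp}(z)} a_y \prod_{i \in \mathrm{supp}(y)} \alpha_{y_i, z_i}$. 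Combining the uniform bound $|\alpha_{j,l}| \leq \theta/k$ with a careful re-application of Lemma~\ref{lem:specgen beckner} (or a dual Parseval-type argument exploiting the block-permutation invariance of the lifted functions) should let us trade the factor $N^{\gamma m}$ for $(\theta k^\gamma)^m$, completing the proof.
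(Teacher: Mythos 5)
Your set-up (rational approximation, lifting to the uniform measure on $\mathbb{Z}_N$, the vanishing of $\alpha_{j,0}$ for $j\neq 0$, and the block-wise Parseval identity giving $\sum_{[z]=m}|b_z|^2=\sum_{[y]=m}|a_y|^2$) is correct, but the proof stops exactly where the real difficulty begins. Applying Lemma \ref{lem:specgen beckner} on $\mathbb{Z}_N^n$ yields the constant $(CN^{\gamma})^m$, and $N$ is unbounded along the rational approximation, so this bound is vacuous in the limit. The final sentence --- that the sparsity of the $b_z$, or ``a careful re-application'' of the lemma, ``should let us trade $N^{\gamma m}$ for $(\theta k^{\gamma})^m$'' --- is precisely the assertion that needs a proof, and no mechanism is offered for it. Lemma \ref{lem:specgen beckner} as stated is insensitive to any sparsity or algebraic structure of the coefficients: it only sees the $\ell^2$ mass at level $m$, which you have already computed to be $\sum|a_y|^2$, so re-applying it cannot improve the $N$-dependence. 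This is a genuine gap, not a routine verification.

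For comparison, the paper's proof never passes to a large alphabet. It replicates the space only $k$ times, forming $G=(\mathbb{Z}_k^n)^k$ and the functions $g_y(X)=\prod_{i:\,y_i\neq 0}\sum_{l=0}^{k-1}c_{y_i}(X_i^l)\varepsilon^{ly_i}$; the invariance of the $L^4$ norm under the cyclic shifts $X\mapsto X_z$ introduces an auxiliary uniform variable $z\in\mathbb{Z}_k^n$, to which Lemma \ref{lem:specgen beckner} is applied with alphabet size $k$ (giving $(Ck^{\gamma})^m$), while the pointwise bound $|g_y(X)|\leq\theta^m$ supplies the factor $\theta^m$. The descent back to $u_y$ is then a Jensen/conditional-expectation step, using that integrating $g_y$ over $X^1,\dots,X^{k-1}$ returns $u_y(X^0)$. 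If you want to rescue your route, you would need a version of the hypercontractive inequality on $\mathbb{Z}_N$ whose constant for the specific subspace spanned by your lifted functions depends on $k$ and $\theta$ rather than on $N$ --- which is essentially equivalent to the lemma you are trying to prove.
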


\begin{proof}
  The proof generalizes the symmetrization technique of the proof of Lemma 2.1 of \cite{Talagrand1994}. Recall the definitions
  of $\varepsilon$ and $w_{y}$ for $y\in\mathbb{Z}_{k}^{n}$ of Section \ref{sub:Beckner's-inequality}. Let $n,k,m$ and the numbers $a_{y}$
  $y\in\mathbb{Z}_{k}^{n}$ as in the statement of Lemma \ref{lem:specgen beckner}.

\textbf{Step 1} Define the product space $G=\left(\mathbb{Z}_{k}^{n}\right)^{k}$ with the probability
  measure $\mu^{n}_{k}=\bigotimes_{i=1}^{k}\mu.$ 
   For $y,z\in\mathbb{Z}_{k}^{n}$ define the functions $g_{y},g_{y,z}$
  on $G$ as follows. For $X=\left(X^{0},\ldots,X^{k-1}\right)\in\left(\mathbb{Z}_{k}^{n}\right)^{k}$
  and $z\in\mathbb{Z}_{k}^{n},$ let
  \begin{align}
    g_{y}\left(X\right)
    & = \prod_{1\leq i\leq n,\, y_{i}\neq0}\sum_{l=0}^{k-1}c_{y_{i}}(X_{i}^{l})\varepsilon^{ly_{i}}, \label{eq:def g_y} \\
    g_{y,z}\left(X\right)
    & = \prod_{1\leq i\leq n,\, y_{i}\neq0}\varepsilon^{z_{i}y_{i}}\sum_{l=0}^{k-1}c_{y_{i}}(X_{i}^{l})\varepsilon^{ly_{i}}=g_{y}\left(X\right)w_{y}(z). \label{eq:def g_y,z}
  \end{align}
  Recall that $\nu$ is the uniform measure on $\mathbb{Z}_{k}^{n},$
  and define the set $H=G\times\mathbb{Z}_{k}^{n}$ and the product
  measure $\kappa=\mu_{k}\otimes\nu$ on $H.$ We also define, for $y\in\mathbb{Z}_{k}^{n}$
  the functions $h_{y}$ on $H$ by $h_{y}\left(X,z\right)=g_{y,z}\left(X\right)=g_{y}\left(X\right)w_{y}(z).$

\textbf{Step 2} For $X$ as before and for $z\in\mathbb{Z}_{k}^{n}$ define $X_{z}$ as
  \[
    \left(X_{z}\right)_{i}^{l}=X_{i}^{l+z_{i}\mbox{ mod }k}.
  \]
  Then
  \begin{align*}
    g_{y,z}\left(X_{z}\right)
    & = \prod_{1\leq i\leq n,\, y_{i}\neq0}\sum_{l=0}^{k-1}c_{y_{i}}(X_{i}^{l+z_{i}\mbox{ mod }k})\varepsilon^{(l+z_{i})y_{i}} \\
    & = \prod_{1\leq i\leq n,\, y_{i}\neq0}\sum_{l=0}^{k-1}c_{y_{i}}(X_{i}^{l})\varepsilon^{ly_{i}} = g_{y}\left(X\right).    
  \end{align*}

  Hence for each fixed $z\in\mathbb{Z}_{k}^{n},$ we have 
  \begin{eqnarray}
    \left\Vert \sum_{\left[y\right]=m}a_{y}g_{y}\right\Vert _{L^{4}\left(\mu^{n}_{k}\right)} & = & \left\Vert \sum_{\left[y\right]=m}a_{y}g_{y,z}\right\Vert _{L^{4}\left(\mu^{n}_{k}\right)}.\label{eq:pf gen lemma 2.1 - 1}
  \end{eqnarray}
  Integrating over the variable $z$ with respect to $\nu^{n},$ Fubini's theorem gives that
  \begin{equation}
    \left\Vert \sum_{\left[y\right]=m}a_{y}g_{y}\right\Vert _{L^{4}\left(\mu^{n}_{k}\right)}=\left\Vert \sum_{\left[y\right]=m}a_{y}h_{y}\right\Vert _{L^{4}\left(\kappa\right)}.\label{eq:gen lemma 2.1 - 2}
  \end{equation}

\textbf{Step 3} For fixed $X,$ use Lemma \ref{lem:specgen beckner} for the numbers $a_{y}g_{y}\left(X\right),$ and get
  \begin{equation}
    \int\left|\sum_{\left[y\right]=m}a_{y}g_{y}\left(X\right)w_{y}\left(z\right)\right|^{4}d\nu^{n}(z)\leq\left(Ck^{\gamma}\right)^{4m}\left(\sum_{\left[y\right]=m}\left|a_{y}g_{y}\left(X\right)\right|^{2}\right)^{2}.\label{eq:gen lemma 2.1 - 3}
  \end{equation}
  Since $\theta=k\max_{i,j}\left|c_{i}\left(j\right)\right|$, we have that $\left|g_{y}\left(X\right)\right|\leq\theta^{m}$, which together
  with \eqref{eq:gen lemma 2.1 - 3} gives 
  \[
    \int\left|\sum_{\left[y\right]=m}a_{y}g_{y}\left(X\right)w_{y}\left(z\right)\right|^{4}d\nu^{n}(z)\leq\left(C\theta k^{\gamma}\right)^{4m}\left(\sum_{\left[y\right]=m}\left|a_{y}\right|^{2}\right)^{2}.
  \]
  Integrating with respect to $d\mu_{k}(X)$ and taking the $4$th root gives
  \begin{equation}
    \left\Vert \sum_{\left[y\right]=m}a_{y}h_{y}\right\Vert _{L^{4}\left(\kappa\right)}\leq\left(C\theta k^{\gamma}\right)^{m}\left(\sum_{\left[y\right]=m}\left|a_{y}\right|^{2}\right)^{1/2}.\label{eq:gen lemma 2.1 - 3,5}
  \end{equation}
  By \eqref{eq:gen lemma 2.1 - 3,5} and \eqref{eq:gen lemma 2.1 - 2} we only have to show that
  \begin{equation}
    \left\Vert \sum_{\left[y\right]=m}a_{y}u_{y}\right\Vert _{L^{4}\left(\mu^{n}\right)}\leq\left\Vert \sum_{\left[y\right]=m}a_{y}g_{y}\right\Vert _{L^{4}\left(\mu^{n}_{k}\right)}. \label{eq:laststepGenlemma2.1}
  \end{equation}

\textbf{Step 4 } Now we prove an alternative form of the function $g_{y}.$ Recall the definition
  \eqref{eq:def g_y} of $g_{y}.$ Expand the product, and get
  \begin{align}
    g_{y}\left(X\right)
    & = \prod_{1\leq i\leq n,\, y_{i}\neq0}\sum_{l=0}^{k-1}c_{y_{i}}(X_{i}^{l})\varepsilon^{ly_{i}}\nonumber \\
    & = \sum_{\alpha:\left(*\right)}\prod_{1\leq i\leq n,\, y_{i}\neq0}c_{y_{i}}(X_{i}^{\alpha\left(i\right)})\varepsilon^{\alpha\left(i\right)y_{i}}, \label{eq:gen lemma 2.1 - 4}   
  \end{align}
  where $\left(*\right)$ denotes the sum over all functions $\alpha:\left\{ i\,|\, y_{i}\neq0\right\} \rightarrow\mathbb{Z}_{k}.$

  We will use the following trivial observation:

  \textbf{Observation:} $c_{y_{i}}(X_{i}^{l})\varepsilon^{ly_{i}}=1$ whenever $y_{i}=0.$

  With the Observation we rewrite \eqref{eq:gen lemma 2.1 - 4} as follows.

  \begin{align}
    g_{y}\left(X\right)
    & = \sum_{\alpha\in\mathcal{A}_{y}}\prod_{i=1}^{n}c_{y_{i}}(X_{i}^{\alpha\left(i\right)})\varepsilon^{\alpha\left(i\right)y_{i}}\nonumber \\
    & = \sum_{\alpha\in\mathcal{A}_{y}}\prod_{t\in\mathbb{Z}_{k}}\prod_{1\leq i\leq n,\,\alpha\left(i\right)=t}c_{y_{i}}(X_{i}^{t})\varepsilon^{ty_{i}}, \label{eq:gen lemma 2.1 - 5}
  \end{align}
  where $\mathcal{A}_{y}$ is the set of functions $\alpha:\left\{ 1,2,\ldots,n\right\} \rightarrow\mathbb{Z}_{k}$
  with the property that $\alpha\left(i\right)=0$ if $y_{i}=0.$ For
  a function $\alpha\in\mathcal{A}_{y}$ we can define the vectors $v^{t}=v^{t}\left(\alpha\right)\in\mathbb{Z}_{k}^{n}$
  for $t\in\mathbb{Z}_{k}$ by 
  \[
    v_{i}^{t}=v_{i}^{t}\left(\alpha\right)=
    \begin{cases}
      y_{i} & \mbox{ if }\alpha\left(i\right)=t \\
      0 & \mbox{otherwise.}
    \end{cases}
  \]
  The map $\alpha\mapsto(v^{t}\left(\alpha\right))_{t\in\mathbb{Z}_{k}}$
  is one-to-one, furthermore the image of $\mathcal{A}_{y}$ under this map is
  \[
    \mathcal{V}_{y}=\left\{ v=\left(v^{t}\right)_{t\in\mathbb{Z}_{k}}\left|\sum_{t\in\mathbb{Z}_{k}}v^{t}=y,\mbox{ and }\forall i\, v_{i}^{t}\neq0\mbox{ for at most one }t\in\mathbb{Z}_{k}\,\right.\right\} .
  \]
  Using the properties of the map $\alpha\mapsto(v^{t}\left(\alpha\right))_{t\in\mathbb{Z}_{k}}$
  together with the Observation and the definition of $u,$ we can conclude from \eqref{eq:gen lemma 2.1 - 5} that 
  \begin{align}
    g_{y}\left(X\right)
    & = \sum_{v\in\mathcal{V}_{y}}\prod_{t\in\mathbb{Z}_{k}}\prod_{i=1}^{n}c_{v_{i}^{t}}(X_{i}^{t})\varepsilon^{tv_{i}^{t}}\nonumber \\
    & = \sum_{v\in\mathcal{V}_{y}}\prod_{t\in\mathbb{Z}_{k}}u_{v^{t}}(X^{t})\varepsilon^{t\left\langle v^{t},\boldsymbol{1}\right\rangle }\label{eq:gen lemma 2.1 - 6}
  \end{align}
  where $\boldsymbol{1}$ is vector in $\mathbb{Z}_{k}^{n}$ with all coordinates equal to $1.$

\textbf{Step 5} Now we prove
  \eqref{eq:laststepGenlemma2.1}. Jensen's inequality gives that
  \begin{align}
    \int\left|\sum_{\left[y\right]=m}a_{y}g_{y}\left(X\right)\right|^{4}d\mu^{n}_{k}\left(X\right) 
  & \geq \int\left|\int\sum_{\left[y\right]=m}a_{y}g_{y}\left(X\right)d\mu^{n}_{k-1}\left(X^{1},\ldots,X^{k-1}\right)\right|^{4}d\mu^{n}\left(X^{0}\right) \nonumber \\
  & = \int\left|\sum_{\left[y\right]=m}a_{y}\int g_{y}\left(X\right)d\mu^{n}_{k-1}\left(X^{1},\ldots,X^{k-1}\right)\right|^{4}d\mu^{n}\left(X^{0}\right).\label{eq:gen lemma 2.1 - 7}
  \end{align}
  By \eqref{eq:gen lemma 2.1 - 6}, the inner integral of the left hand side of \eqref{eq:gen lemma 2.1 - 7} is
  \begin{align}
    \int g_{y}\left(X\right)d\mu^{n}_{k-1}\left(X^{1},\ldots,X^{k-1}\right)
    & = \int\sum_{v\in\mathcal{V}_{y}}\prod_{t\in\mathbb{Z}_{k}}u_{v^{t}}(X^{t})\varepsilon^{t\left\langle v^{t},\boldsymbol{1}\right\rangle }d\mu^{n}_{k-1}\left(X^{1},\ldots,X^{k-1}\right) \\
    & = \sum_{v\in\mathcal{V}_{y}}\left(\prod_{t\in\mathbb{Z}_{k}}\varepsilon^{t\left\langle v^{t},\boldsymbol{1}\right\rangle }\right)u_{v^{0}}(X^{0})\prod_{l=1}^{k-1}\int u_{v^{l}}(X^{l})d\mu^{n}\left(X^{l}\right).\label{eq:gen lemma 2.1 - 9}
  \end{align}
  Since $u_{0}$ is the constant $1$ function on $\mathbb{Z}_{k}^{n},$
  and by Lemma \ref{lem:u onb} $\left(u_{w},\, w\in\mathbb{Z}_{k}^{n}\right)$
  is an orthonormal basis of $L_{\mu}\left(\mathbb{Z}_{k}^{n}\right),$ we have
  \begin{align*}
    \int u_{w}d\mu^{n}
    & =\int u_{w}u_{0}d\mu^{n} = 
    \begin{cases}
      1 & \mbox{if }w=0 \\
      0 & \mbox{otherwise.}   
    \end{cases}
  \end{align*}
  By this and the definition of $\mathcal{V}_{y}$ we conclude from \eqref{eq:gen lemma 2.1 - 9} that
  \begin{align}
    \int g_{y}\left(X\right)d\mu^{n}_{k-1}\left(X^{1},\ldots,X^{k-1}\right)
    & =\sum_{v\in\mathcal{V}_{y},\, v^{1}=\ldots=v^{k-1}=0}\left(\prod_{t\in\mathbb{Z}_{k}}\varepsilon^{t\left\langle v^{t},\boldsymbol{1}\right\rangle }\right)u_{v^{0}}(X^{0})=u_{y}(X^{0}).\label{eq:gen lemma 2.1 - 8}
  \end{align}
  \eqref{eq:gen lemma 2.1 - 8} together with \eqref{eq:gen lemma 2.1 - 7} gives that
  \[
    \int\left|\sum_{\left[y\right]=m}a_{y}g_{y}\left(X\right)\right|^{4}d\mu^{n}_{k}\left(X\right)\geq\int\left|\sum_{\left[y\right]=m}a_{y}u_{y}(X^{0})\right|^{4}d\mu^{n}\left(X^{0}\right),
  \]
  from which by taking the $4$th root, we get \eqref{eq:laststepGenlemma2.1}.
  This completes the proof of Lemma \eqref{lem:Gen Lemma 2.1}.
\end{proof}
From Lemma \eqref{lem:Gen Lemma 2.1} and duality, we conclude the following lemma.
\begin{lem}
  \label{lem:genProp2.2} With the constants of Lemma \ref{lem:specgen beckner}, for any function $g\in L_{\mu}\left(\mathbb{Z}_{k}^{n}\right)$
  we have
  \[
    \sum_{\left[y\right]=l}\left|\hat{g}\left(y\right)\right|^{2}\leq\left(C\theta k^{\gamma}\right)^{2l}\left\Vert g\right\Vert _{L^{4/3}\left(\mu\right)}^{2}.
  \]
\end{lem}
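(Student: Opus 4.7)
The plan is to deduce Lemma \ref{lem:genProp2.2} from Lemma \ref{lem:Gen Lemma 2.1} by the standard duality between $L^4$ and $L^{4/3}$, exactly as one does in Talagrand's original argument.

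First, I would set $a_y=\hat g(y)=\langle g,u_y\rangle_{\mu^n}$ for every $y$ with $[y]=l$, and define the ``level-$l$ truncation''
\[
  f=\sum_{[y]=l}a_y u_y\in L_{\mu^n}\bigl(\mathbb{Z}_k^n\bigr).
\]
Since $\{u_y\}_{y\in\mathbb{Z}_k^n}$ is an orthonormal basis of $L_{\mu^n}(\mathbb{Z}_k^n)$ by Lemma \ref{lem:u onb}, expanding $g$ in this basis and using orthonormality gives
\[
  \langle g,f\rangle_{\mu^n}=\sum_{[y]=l}\hat g(y)\overline{a_y}=\sum_{[y]=l}|\hat g(y)|^2.
\]

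Next, I would apply H\"older's inequality with conjugate exponents $4/3$ and $4$ to bound the same inner product from above:
\[
  \Bigl|\langle g,f\rangle_{\mu^n}\Bigr|\le \|g\|_{L^{4/3}(\mu^n)}\,\|f\|_{L^{4}(\mu^n)}.
\]
Now Lemma \ref{lem:Gen Lemma 2.1}, applied to the coefficients $a_y$, gives
\[
  \|f\|_{L^{4}(\mu^n)}\le (C\theta k^\gamma)^{l}\Bigl(\sum_{[y]=l}|a_y|^2\Bigr)^{1/2}=(C\theta k^\gamma)^{l}\Bigl(\sum_{[y]=l}|\hat g(y)|^2\Bigr)^{1/2}.
\]

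Combining the three displays yields
\[
  \sum_{[y]=l}|\hat g(y)|^2\le \|g\|_{L^{4/3}(\mu^n)}\,(C\theta k^\gamma)^{l}\Bigl(\sum_{[y]=l}|\hat g(y)|^2\Bigr)^{1/2}.
\]
Assuming the sum on the left is positive (otherwise the claim is trivial), dividing through by its square root and squaring the resulting inequality gives the desired bound. There is essentially no obstacle here; the only content is identifying the right dual pairing, and the hypercontractive estimate from Lemma \ref{lem:Gen Lemma 2.1} does all of the real work.
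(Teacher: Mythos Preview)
Your argument is correct and is exactly the duality argument the paper invokes (the paper merely writes ``From Lemma \ref{lem:Gen Lemma 2.1} and duality'' without spelling it out). Nothing further is needed.
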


\subsection{Completion of the proof of Theorem \ref{thm:Thm1.5gen}}

Notice that
\begin{align*}
  \int_{\Omega}u_{y}\left(x_{1},\ldots,x_{i-1},\xi,x_{i+1},\ldots,x_{n}\right)\mu\left(d\xi\right) 
  & = \sum_{j\in\mathbb{Z}_{k}}c_{y_{i}}(j)\mu\left(\left\{j\right\}\right)\prod_{1\leq l\leq n\, l\neq i}c_{y_{l}}\left(x_{l}\right) \\
  & = \left\langle c_{y_{i}},c_{0}\right\rangle _{\mu}\prod_{1\leq l\leq n\, l\neq i}c_{y_{l}}\left(x_{l}\right) \\
  & = 
  \begin{cases}
    u_{y}\left(x\right) & \mbox{if }y_{i}=0 \\
    0 & \mbox{if }y_{i}\neq0.
  \end{cases}
\end{align*}
Hence
\begin{align*}
  \int_{\Omega}f\left(x_{1},\ldots,x_{i-1},\xi,x_{i+1},\ldots,x_{n}\right)\mu\left(d\xi\right) 
  & = \sum_{y\in\mathbb{Z}_{k}^{n},\, y_{i}=0}\hat{f}(y)u_{y}
\end{align*}

where $f=\sum_{y}\hat{f}(y)u_{y},$ i.e $\hat{f}(y)=\left\langle f,u_{y}\right\rangle _{\mu}.$

By the definition of $\Delta_{i}f$ we have
\begin{equation}
  \Delta_{i}f=\sum_{y\in\mathbb{Z}_{k}^{n},\, y_{i}\neq0}\hat{f}(y)u_{y}.\label{eq:deltafFourier}
\end{equation}

Recall that $\left[y\right]$ was the number of
non-zero coordinates of a vector $y\in\mathbb{Z}_{k}.$ Define $M(g)$
by
\begin{equation*}
  M\left(g\right)^{2}=\sum_{y\in\mathbb{Z}_{k}^{n},\, y\neq0}\frac{\hat{g}\left(y\right)^{2}}{\left[y\right]}\mbox{ for }g\in L_{\mu}\left(\mathbb{Z}_{k}^{n}\right).
\end{equation*}

Take a function $f\in L_{\mu}\left(\mathbb{Z}_{k}^{n}\right)$
with $\int fd\mu=0$ (which is equivalent to $\hat{f}(0)=0$). Then
Parseval's formula and \eqref{eq:deltafFourier} gives that
\begin{equation}
  \left\Vert f\right\Vert _{L^{2}\left(\mu^{n}\right)}^{2}=\sum_{y\neq0}\hat{f}(y)^{2}=\sum_{i=1}^{n}M(\Delta_{i}f)^{2}. \label{eq:Thm1.5 pf - 1}
\end{equation}

Since $1=\sum_{j=0}^{k-1}\left|c_{i}\left(j\right)\right|^{2}p_{j},$
we can conclude that $\theta\leq k/\min_{j}\sqrt{p_{j}}.$ Hence Theorem
\ref{thm:Thm1.5gen} follows from the Proposition \ref{pro:M(g)^2 upper bound} bellow
and \eqref{eq:Thm1.5 pf - 1}.
\begin{prop} 
  \label{pro:M(g)^2 upper bound} There is a positive
  constant $K,$ such that if $\int gd\mu=0,$ we have 
  \[
    M(g)^{2}\leq K\log\left(C\theta k^{\gamma}\right)\frac{\left\Vert g\right\Vert _{2}^{2}}{\log\left(e\left\Vert g\right\Vert _{2}/\left\Vert g\right\Vert _{1}\right)},
  \]
  where $\theta=k\max_{i=1,\ldots,n\, j\in\mathbb{Z}_{k}}\left|c_{i}\left(j\right)\right|,$
  and the constants $C,\gamma$ are the same as in Lemma \ref{lem:specgen beckner}.
\end{prop}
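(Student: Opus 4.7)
The plan is to follow Talagrand's standard level-decomposition strategy, using the dual hypercontractive inequality of Lemma \ref{lem:genProp2.2} for the low frequencies and Parseval for the high ones. Set $A=C\theta k^{\gamma}$ and, for each level $l\in\{1,\ldots,n\}$, let
\[
S_{l}=\sum_{[y]=l}|\hat{g}(y)|^{2}.
\]
Then $M(g)^{2}=\sum_{l=1}^{n}S_{l}/l$, and Parseval together with $\hat{g}(0)=0$ gives $\sum_{l}S_{l}=\|g\|_{2}^{2}$.

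First I would feed $\|g\|_{4/3}$ into Lemma \ref{lem:genProp2.2} and use log-convexity of $L^{p}$ norms, namely $\|g\|_{4/3}\leq\|g\|_{1}^{1/2}\|g\|_{2}^{1/2}$ (which is interpolation with $\alpha=1/2$ between $L^{1}$ and $L^{2}$), to obtain the level-wise bound
\[
S_{l}\leq A^{2l}\,\|g\|_{1}\|g\|_{2}.
\]
This is the key estimate for low frequencies.

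Next I would split the sum at a cutoff $l_{0}$ to be chosen. For $l\le l_{0}$, I drop the $1/l$ factor and sum a geometric series (assuming WLOG $A^{2}\ge 2$, which costs only an absolute constant) to get $\sum_{l\le l_{0}}S_{l}/l\le 2A^{2l_{0}}\|g\|_{1}\|g\|_{2}$. For $l>l_{0}$, I use the trivial bound $\sum_{l>l_{0}}S_{l}/l\le\|g\|_{2}^{2}/(l_{0}+1)$ from Parseval. Writing $r=\|g\|_{1}/\|g\|_{2}\in(0,1]$, this yields
\[
\frac{M(g)^{2}}{\|g\|_{2}^{2}}\le 2rA^{2l_{0}}+\frac{1}{l_{0}+1}.
\]
The natural choice is $l_{0}=\lfloor\log(1/r)/(4\log A)\rfloor$, for which $A^{2l_{0}}\le r^{-1/2}$ and hence $rA^{2l_{0}}\le r^{1/2}$. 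Since the function $r\mapsto r^{1/2}\log(e/r)$ is bounded by a universal constant for $r\in(0,1)$, the first term is $O(1/\log(e/r))$, and the second is $O(\log A/\log(e/r))$, matching the target bound.

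The main obstacle is the edge case where $r$ is close to $1$ (so $\log(e/r)$ is small) and also where $\log(1/r)\le 4\log A$ (so the natural $l_{0}$ is not $\ge 1$). In both regimes the optimization above misbehaves, but these are easily dispatched by using the trivial bound $M(g)^{2}\le\|g\|_{2}^{2}$ (which follows since $[y]\ge 1$ in the definition of $M$) and noting that $\log(e/r)=O(\log A)$ in this range, so the statement is automatic. Apart from this case split, everything reduces to routine calculus on the cutoff.
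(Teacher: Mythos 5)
Your proposal is correct and is essentially the paper's argument: the paper simply defers to Talagrand's proof of his Proposition 2.3, run with $q=4$, with Lemma \ref{lem:genProp2.2} supplying the level-wise bound and with the interpolation inequality $\left\Vert g\right\Vert_{4/3}^{2}\leq\left\Vert g\right\Vert_{1}\left\Vert g\right\Vert_{2}$ — exactly the level decomposition, geometric-series/Parseval split, and cutoff optimization you carry out explicitly. Your handling of the edge cases ($r$ near $1$, or $\log(1/r)\lesssim\log A$) via the trivial bound $M(g)^{2}\leq\left\Vert g\right\Vert_{2}^{2}$ is the right way to absorb them into the constant $K$.
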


\begin{proof}
  The proof of Proposition \eqref{pro:M(g)^2 upper bound}
  is the same as the proof of Proposition 2.3 in \cite{Talagrand1994}
  with the following modifications. Take $q=4$ instead of $q=3$, and
  use Lemma \ref{lem:genProp2.2} instead of Proposition 2.2 of \cite{Talagrand1994}.
  The only difference will be in the constants. First we get the term
  $2\log\left(C\theta k^{\gamma}\right)$ in stead of $\log\left(2\theta^{2}\right)$.
  Furthermore we have to replace the estimate
  \[
    \frac{\left\Vert g\right\Vert _{2}}{\left\Vert g\right\Vert _{1}}\leq\left(\frac{\left\Vert g\right\Vert _{2}}{\left\Vert g\right\Vert _{3/2}}\right)^{3}
  \]
  by
  \[
    \frac{\left\Vert g\right\Vert _{2}}{\left\Vert g\right\Vert _{1}}\leq\left(\frac{\left\Vert g\right\Vert _{2}}{\left\Vert g\right\Vert _{4/3}}\right)^{2},
  \]
  which is a consequence of the Cauchy-Schwartz inequality. This substitution only affects the constant $K.$

  This completes the proof of Proposition \eqref{pro:M(g)^2 upper bound}
  and the proof of Theorem \ref{thm:Thm1.5gen}.
\end{proof}

\subsubsection*{Acknowledgment.}

I thank Rob van den Berg, for introducing me to the subject, and for the comments on drafts of this paper.

\bibliographystyle{abbrv}
\bibliography{/ufs/demeter/LyX/References/myreflist}

\end{document}